\numberwithin{equation}{section}
\numberwithin{figure}{section}
\theoremstyle{plain}
\newtheorem{thm}{\protect\theoremname}
\theoremstyle{remark}
\newtheorem{rem}[thm]{\protect\remarkname}
\theoremstyle{plain}
\newtheorem{prop}[thm]{\protect\propositionname}
\global\long\def\sign{\operatorname{sign}}
\providecommand{\propositionname}{Proposition}
\providecommand{\remarkname}{Remark}
\providecommand{\theoremname}{Theorem}
\begin{document}
\title{Zeros of Taylor polynomials formed by a three-term recurrence}
\author{Juhoon Chung \and Khang Tran}
\address{Department of Mathematics \\
 California State University\\
 5245 North Backer Avenue M/S PB108 Fresno, CA 93740}
\begin{abstract}
From a sequence $\left\{ a_{n}\right\} _{n=0}^{\infty}$ of real numbers
satisfying a three-term recurrence, we form a sequence of polynomials
$\left\{ P_{m}(z)\right\} _{m=0}^{\infty}$ whose coefficients are
numbers in this sequence. We showed that under explicit conditions,
the zeros of $P_{m}(z)$, $m\gg1$, lie on one side of the circle
whose radius is given by modulus of a zero of the denominator of the
generating function of $\left\{ a_{n}\right\} _{n=0}^{\infty}$. 
\end{abstract}

\maketitle

\section{Introduction}

For $a,b,c\in\mathbb{R}$ and $abc\ne0$, we let $\left\{ a_{n}\right\} _{n=0}^{\infty}$
be a sequence of real numbers satisfying the three-term recurrence
\begin{equation}
ca_{n+2}+ba_{n+1}+aa_{n}=0,n\ge0,\label{eq:a_nrecurrence}
\end{equation}
with the initial values $(a_{0},a_{1})\ne(0,0)$. This sequence includes
many classic sequences in mathematics such as the Fibonacci, Lucas,
and Pell sequences (see \cite{wells,hoggatt}). It is known that we
can obtain a closed formula for $a_{n}$ from the zeros, denoted by
$\alpha$ and $\beta$, of the characteristic polynomial $c+bt+at^{2}$.
In this paper, these zeros play a central role in determining the
location of the zeros of Taylor polynomials associated with this sequence.
The sequence of Taylor polynomials $\left\{ P_{m}(z)\right\} _{m=0}^{\infty}$
associated with the sequence $\left\{ a_{n}\right\} _{n=0}^{\infty}$
is defined by 
\[
P_{m}(z)=\sum_{n=0}^{m}a_{n}z^{n}.
\]
The study of locations of zeros of Taylor polynomials from general
sequences (not necessarily from those satisfying \eqref{eq:a_nrecurrence})
is of interest to many mathematicians. A classic example is the case
$a_{n}=1/n!$, where the zeros of the associate Taylor polynomials
approach the famous Szego curve (see \cite{pv}). It is not difficult
to show when $\left\{ a_{n}\right\} _{n=0}^{\infty}$ satisfies \eqref{eq:a_nrecurrence}
and $|\alpha|>|\beta|$, most of the zeros of $P_{m}(z)$ approach
the circle radius $|c|/|a\alpha|=|\beta|$ (c.f. Remark \ref{rem:limitzeros})
as $m\rightarrow\infty$. In this paper we provide conditions under
which the zeros $P_{m}(z),$$m\gg1$, lie on one side of this circle.
There are many studies of polynomials whose zeros lie on one side
of a circle or a half plane. We refer the readers to a development
by Borcea and Branden (\cite{bb}) on linear operator preserving zeros
of polynomials on a circular domain. We state the three main theorems
of this paper. 
\begin{thm}
\label{thm:maintheorem1} Let $\alpha$ be the largest (in modulus)
zero of $at^{2}+bt+c$. If $ac<0$, $a_{0}b(a_{1}c+ba_{0})\ge0$,
then with the possible one exceptional zero, all the zeros of $P_{m}(z)$,
$m\gg1$, lie outside the open ball centered at 0 with radius $|c|/|a\alpha|$.
Moreover, the exceptional zero occurs if and only if 
\[
|a_{1}c+ba_{0}|>|a\alpha a_{0}|.
\]
\end{thm}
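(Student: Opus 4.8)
The plan is to pass from the recurrence to the rational generating function, read off a closed form for $P_m$, and then count the zeros inside the critical disk $|z|<|\beta|$ by combining Hurwitz's theorem on compact subsets with a boundary estimate on the circle $|z|=|\beta|$. First I would record the consequences of $ac<0$: the discriminant $b^{2}-4ac$ is positive, so $\alpha,\beta$ are real and distinct with $\alpha\beta=c/a<0$ and $|\alpha|>|\beta|$, and the critical radius is exactly $|\beta|=|c|/|a\alpha|$. Summing the recurrence against $z^{n}$ gives
\[
A(z):=\sum_{n=0}^{\infty}a_{n}z^{n}=\frac{ca_{0}+(ca_{1}+ba_{0})z}{a(z-\alpha)(z-\beta)}=\frac{R_{\alpha}}{z-\alpha}+\frac{R_{\beta}}{z-\beta},
\]
and expanding each simple pole as a finite geometric series produces the closed form
\[
P_{m}(z)=R_{\alpha}\,\frac{1-(z/\alpha)^{m+1}}{z-\alpha}+R_{\beta}\,\frac{1-(z/\beta)^{m+1}}{z-\beta}.
\]

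Next I would locate the single admissible interior zero. On compact subsets of $|z|<|\beta|$ the Taylor partial sums converge, $P_{m}\to A$, and $A$ vanishes only at $z_{0}=-ca_{0}/(ca_{1}+ba_{0})$ (the case $ca_{1}+ba_{0}=0$ being immediate). A direct computation using $|\beta|=|c|/|a\alpha|$ shows $|z_{0}|<|\beta|$ precisely when $|a_{1}c+ba_{0}|>|a\alpha a_{0}|$, which is exactly the exceptional condition in the statement. Hurwitz's theorem then gives, for each fixed $r<|\beta|$ and all $m\gg1$, that $P_{m}$ has exactly one zero in $|z|<r$ (converging to $z_{0}$) when $|z_{0}|<r$ and none when $|z_{0}|>r$. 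Thus the theorem reduces to showing that no further zeros of $P_{m}$ accumulate on the circle $|z|=|\beta|$ from inside.

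This boundary control is where the real work lies. Dividing the closed form, a zero of $P_{m}$ in $|z|<|\beta|$ is equivalent to
\[
(z/\beta)^{m+1}=1+\phi_{m}(z),\qquad \phi_{m}(z)=\frac{R_{\alpha}(z-\beta)}{R_{\beta}(z-\alpha)}\bigl(1-(z/\alpha)^{m+1}\bigr),
\]
and since $|(z/\beta)^{m+1}|=(|z|/|\beta|)^{m+1}<1$ inside the disk while $(z/\alpha)^{m+1}\to0$ uniformly there, any such zero requires $|1+\phi_{m}(z)|<1$, with $\phi_{m}\to\phi(z)=R_{\alpha}(z-\beta)/(R_{\beta}(z-\alpha))$. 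Because $A(z_{0})=0$ forces $\phi(z_{0})=-1$, the set $\{\,|1+\phi|<1\,\}$ is the M\"obius image of a disk attached to $z_{0}$; the hypothesis $a_{0}b(a_{1}c+ba_{0})\ge0$, together with $ac<0$ (which pins down the signs of $\alpha,\beta$ and of the residues $R_{\alpha},R_{\beta}$), is precisely what I would use to prove that this set meets the open disk $|z|<|\beta|$ only in a neighborhood of $z_{0}$. A separate and easier estimate near $z=\beta$ itself---where $A$ has a pole and $\sum_{k=0}^{m}(z/\beta)^{k}$ is of order $m$---shows $P_{m}$ cannot vanish in a fixed neighborhood of $\beta$ inside the disk.

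The main obstacle is exactly this last step: the convergence $P_{m}\to A$ is uniform only on compact subsets and degenerates on $|z|=|\beta|$, so Hurwitz cannot be pushed to the boundary, and one must instead extract a quantitative, sign-based inequality $|1+\phi_{m}(z)|\ge1$ valid on the whole circle outside a shrinking arc near $z_{0}$. Granting this, for all $m\gg1$ the number of zeros of $P_{m}$ in the open disk equals $\mathbf{1}[|z_{0}|<|\beta|]=\mathbf{1}[|a_{1}c+ba_{0}|>|a\alpha a_{0}|]$, which yields the theorem: every zero lies outside the open ball of radius $|c|/|a\alpha|$ save for the single exceptional zero, present exactly under the stated inequality.
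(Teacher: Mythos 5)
Your scaffolding is sound as far as it goes: the partial-fraction closed form for $P_{m}$, the identification of $z_{0}=-ca_{0}/(ca_{1}+ba_{0})$ together with the equivalence of $|z_{0}|<|\beta|$ and $|a_{1}c+ba_{0}|>|a\alpha a_{0}|$, the Hurwitz count on compact subsets of $|z|<|\beta|$, and the reformulation of interior zeros as solutions of $(z/\beta)^{m+1}=1+\phi_{m}(z)$ are all correct, and they do yield the \emph{existence} of the exceptional zero when $|a_{1}c+ba_{0}|>|a\alpha a_{0}|$. But everything the theorem asserts beyond that is contained in the step you explicitly grant: a bound $|1+\phi_{m}(z)|\ge1$, uniform in large $m$, on the part of the disk away from $z_{0}$. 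That inequality \emph{is} the theorem; it is also the only place the hypothesis $a_{0}b(a_{1}c+ba_{0})\ge0$ can act, and your proposal never verifies a single consequence of that hypothesis. In the paper this missing step is precisely Proposition \ref{prop:Roucheineq}: both sides are expanded in powers of $\epsilon$ on the circle $|z|=t_{2}+\epsilon$, the condition $CD\ge0$ is used to write $|C-Dt_{2}|=||C|-|D|t_{2}|$, the sign $\delta$ of the subdominant terms is tracked against the term carrying the factor $(m+1)\epsilon$, and the conclusion then follows from Rouch\'e's theorem after removing the spurious zeros $-t_{1},-t_{2}$ of the numerator of \eqref{eq:H_mformacneg}. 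Without an analogue of that computation, your argument proves nothing outside compact subsets of the open disk.

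Moreover, the geometric substitute you sketch for this step is not viable as stated. Since $\phi(\beta)=0$, the curve $\{|1+\phi|=1\}$ passes through $z=\beta$; because all the data are real, both this curve and the circle $|z|=|\beta|$ have centers on the real axis and meet at the real point $\beta$, so they are \emph{tangent} there. Hence whether $\{|1+\phi_{m}|<1\}$ penetrates the open disk near $\beta$ is a second-order, $m$-dependent question that the limit function $\phi$ alone cannot decide --- this is exactly the $\epsilon$-versus-$m$ balance the paper carries out --- and by Remark \ref{rem:limitzeros} the zeros of $P_{m}$ accumulate on the entire circle, so there is no uniform margin to exploit; your ``separate and easier estimate near $z=\beta$'' on a fixed neighborhood is likewise asserted rather than proved. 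Finally, the borderline case $|a_{1}c+ba_{0}|=|a\alpha a_{0}|$, where $z_{0}$ lies on the critical circle and the theorem still asserts there is no zero in the open ball, escapes both halves of your argument: Hurwitz cannot be applied to a region touching the boundary, and the necessary condition $|1+\phi_{m}(z)|<1$ can be met arbitrarily close to $z_{0}$. The paper's Rouch\'e count on $|z|=t_{2}+\epsilon$ covers this case uniformly (it is the subcase $|C|\le|D|t_{2}$, with $Ct_{1}+D=0$ handled separately at the start of the proposition's proof).
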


For examples of Theorem \ref{thm:maintheorem1}, see Figure \ref{fig:thm1}.

\begin{figure}
\begin{centering}
\includegraphics[scale=0.5]{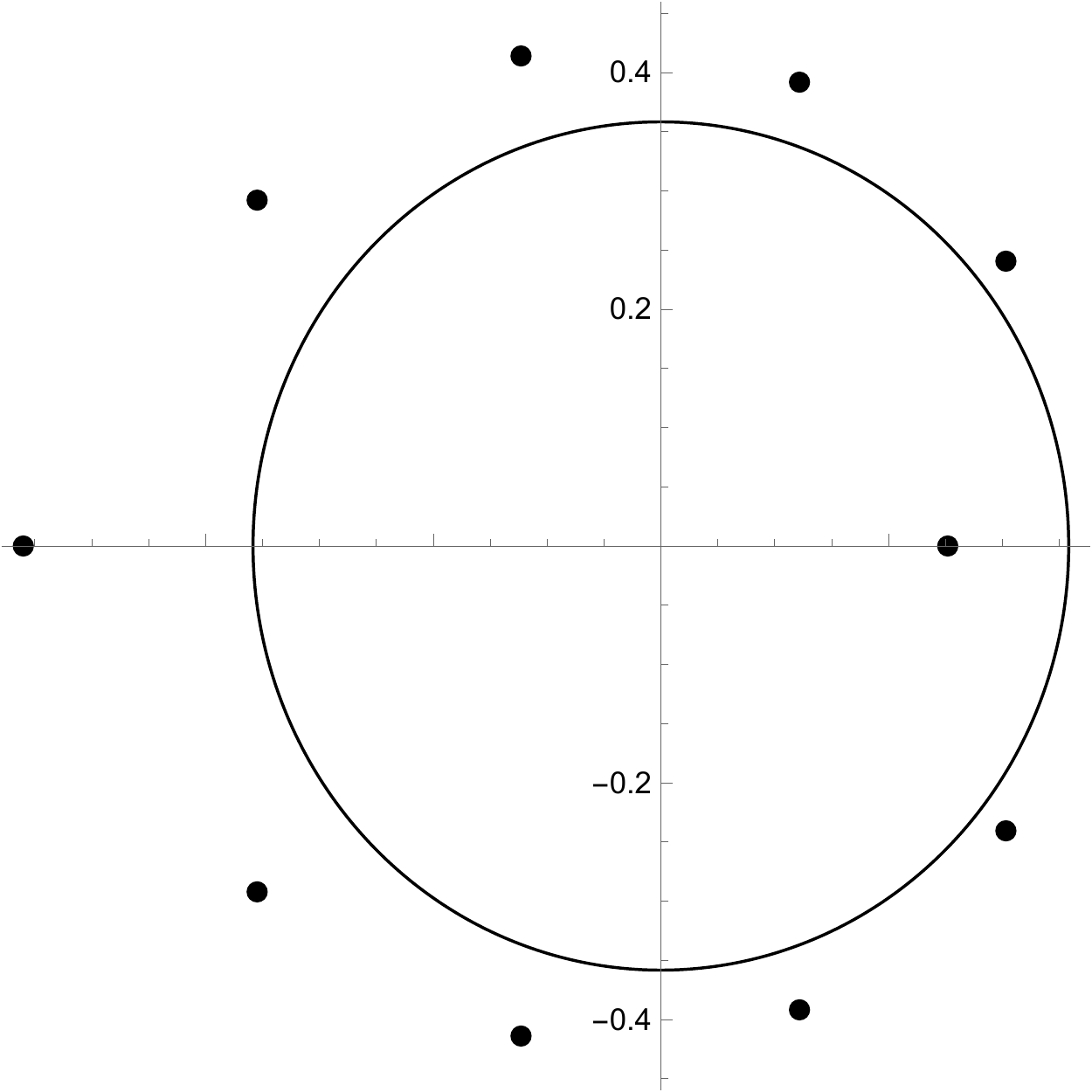}\qquad{}\includegraphics[scale=0.5]{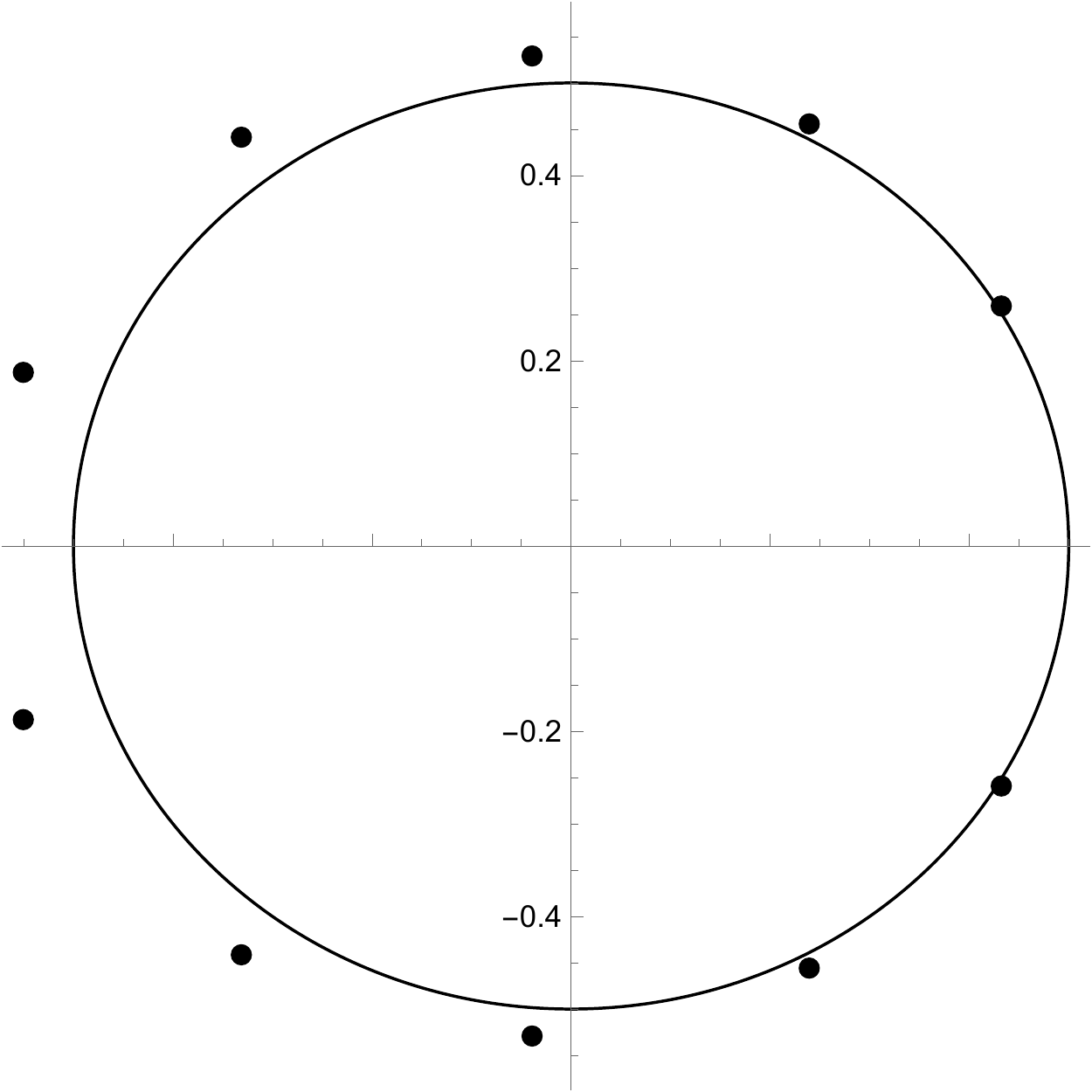} 
\par\end{centering}
\caption{\label{fig:thm1}Zeros of $H_{10}(z)$ when $(a,b,c,a_{0},a_{1})$
is $(5,1,-1,1,-3)$ (left) and $(2,1,-1,2,1)$ (right) }
\end{figure}

\begin{thm}
\label{thm:maintheorem2} Let $\alpha$ be the largest (in modulus)
zero of $at^{2}+bt+c$. If $ac>0$, $b^{2}-4ac>0$, and $a_{0}b(a_{1}c+ba_{0})\le0$,
then for large $m$, all the zeros of $P_{m}(z)$ lie inside the closed
ball centered at 0 with radius $|c|/|a\alpha|$. 
\end{thm}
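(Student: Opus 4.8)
The plan is to convert the defining sum for $P_m$ into a short closed expression using the recurrence, and then to read off the location of the zeros from a single modulus inequality on the circle $|z|=|\beta|$. Write $Q(t)=at^2+bt+c$ (so $\alpha,\beta$ are its zeros and $Q(z)$ is, up to the factor $a$, the denominator of the generating function of $\{a_n\}$) and $N(t)=a_0c+(a_1c+ba_0)t$. Multiplying $P_m$ by $Q$ and using \eqref{eq:a_nrecurrence} to telescope the middle coefficients gives
\[
Q(z)P_m(z)=N(z)-ca_{m+1}z^{m+1}+aa_mz^{m+2}.
\]
Since $ac>0$ and $b^2-4ac>0$, the zeros $\alpha,\beta$ are real, distinct and of one sign with $|\alpha|>|\beta|$; the substitution $z\mapsto-z$ (which preserves all three hypotheses while replacing $b$ by $-b$ and $a_1$ by $-a_1$) lets me assume $\alpha>\beta>0$, whence $\sign b=-\sign a=-\sign c$. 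Substituting the closed form $a_n=A\alpha^{-n}+B\beta^{-n}$ and simplifying, the identity above becomes
\[
(\alpha-\beta)Q(z)P_m(z)=N(\alpha)(z-\beta)\bigl(1-(z/\alpha)^{m+1}\bigr)-N(\beta)(z-\alpha)\bigl(1-(z/\beta)^{m+1}\bigr).
\]

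Next I isolate the exterior of the disk. Dividing by $z^{m+1}$ and multiplying by $\beta^{m+1}$, any zero $z$ of $P_m$ with $|z|\ge\beta$ and $Q(z)\ne0$ must satisfy $\Theta_m(z)=0$, where
\[
\Theta_m(z)=N(\beta)(z-\alpha)+(\alpha-\beta)N(z)(\beta/z)^{m+1}-N(\alpha)(z-\beta)(\beta/\alpha)^{m+1}.
\]
Since $\Theta_m(\alpha)=0$ identically, the zero of $Q$ at $\alpha$ is automatically accounted for and must be discarded. A small-disk Rouch\'e comparison of $\Theta_m$ with $N(\beta)(z-\alpha)$ — using that $(\beta/z)^{m+1}\to0$ uniformly for $z$ near $\alpha$ and $(m+1)(\beta/\alpha)^{m+1}\to0$ — shows that for large $m$ the only zero of $\Theta_m$ in a fixed neighborhood of $\alpha$ is $\alpha$ itself, which is not a zero of $P_m$. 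For $|z|\ge\beta(1+\delta)$ away from that neighborhood the term $N(\beta)(z-\alpha)$ dominates both corrections (as $|z|\to\infty$ as well), so $\Theta_m\ne0$; this needs $N(\beta)\ne0$, which the hypotheses force — vanishing of $N(\beta)$ is exactly the pure $\alpha$-mode (Taylor sections with zeros on $|z|=\alpha$), and one checks it makes $a_0b(a_1c+ba_0)>0$.

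The heart of the argument is the thin annulus $\beta<|z|\le\beta(1+\delta)$. There $\Theta_m=0$ reads $(\beta/z)^{m+1}=W(z)+O\bigl((\beta/\alpha)^{m+1}\bigr)$ with $W(z)=-N(\beta)(z-\alpha)/\bigl((\alpha-\beta)N(z)\bigr)$; since $|(\beta/z)^{m+1}|<1$ for $|z|>\beta$, I want $|W(z)|\ge1$ on $|z|=\beta$. With $z=\beta e^{i\theta}$ the inequality $N(\beta)^2|z-\alpha|^2\ge(\alpha-\beta)^2|N(z)|^2$ is affine in $\cos\theta$ and is an equality at $\theta=0$, so it holds on the whole circle if and only if it holds at $\theta=\pi$, which reduces to
\[
\alpha N(\beta)^2+ca_0(a_1c+ba_0)(\alpha-\beta)^2\ge0.
\]
Here $\alpha N(\beta)^2\ge0$, and $ca_0(a_1c+ba_0)\ge0$ follows from $\sign b=-\sign c$ together with $a_0b(a_1c+ba_0)\le0$; the inequality is strict unless $N(\beta)=0$, which is excluded, so $|W(z)|>1$ for all $z\ne\beta$ on the circle.

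I expect the main obstacle to be upgrading this leading-order modulus inequality to a statement valid for all large $m$ \emph{uniformly up to the boundary}, since the contact $|W|=1$ at $z=\beta$ is only quadratic, $|W(z)|^2-1\sim C\theta^2$ with $C>0$. The exponentially small correction $O\bigl((\beta/\alpha)^{m+1}\bigr)$ must be beaten near $z=\beta$; the saving feature is that (as in the geometric model $\sum(z/\beta)^n$, whose zeros lie exactly on $|z|=\beta$ except at $z=\beta$) the zeros of $P_m$ nearest $\beta$ sit at angular distance $\gtrsim 1/m$, where $|W|-1\gtrsim 1/m^2$ dominates the correction. Combining the strict inequality $|W|>1$ on compact subarcs with this quadratic estimate near $z=\beta$ is the delicate step that rules out any zero with $|z|>\beta$ and thereby replaces the crude "$|z|<\beta+\epsilon$" with the closed disk $|z|\le|\beta|$, completing the proof.
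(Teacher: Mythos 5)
Your algebraic scaffolding is correct and is, modulo the normalization, the paper's own identity \eqref{eq:H_mform} in disguise: the telescoped formula $Q(z)P_m(z)=N(z)-ca_{m+1}z^{m+1}+aa_mz^{m+2}$, the closed form for $(\alpha-\beta)Q(z)P_m(z)$, the reduction to $\Theta_m\ne0$, the on-circle computation showing $|W|\ge1$ on $|z|=\beta$ with equality only at $z=\beta$, and the check that $N(\beta)=0$ is incompatible with $a_0b(a_1c+ba_0)\le0$ are all right. The problem is that the step you yourself call ``the delicate step''---excluding zeros from the set $\{|z|>\beta,\ |z-\beta|<\rho\}$---is never carried out, and the plan you sketch for it fails as stated. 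First, the reduction ``since $|(\beta/z)^{m+1}|<1$ for $|z|>\beta$, I want $|W|\ge1$ on $|z|=\beta$'' is a non sequitur: an on-circle bound controls nothing off the circle, and in fact your hypotheses force $ca_0(a_1c+ba_0)\ge0$, hence $N$ has no zero on $(0,\infty)$ and $N(\alpha)/N(\beta)>0$; then $W(z)=1-\tfrac{N(\alpha)}{(\alpha-\beta)N(\beta)}(z-\beta)+O\bigl((z-\beta)^2\bigr)$ shows $|W|$ drops \emph{below} $1$ as $z$ moves radially outward from $\beta$. So just outside the circle near $\theta=0$ both $|(\beta/z)^{m+1}|$ and $|W(z)|$ are less than $1$, and no modulus inequality inherited from the circle can exclude zeros there; what actually saves the theorem is a different mechanism, namely that $|(\beta/z)^{m+1}|$ decays radially at rate $\sim(m+1)/\beta$ while $W$ moves at rate $O(1)$. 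Second, ``the zeros of $P_m$ nearest $\beta$ sit at angular distance $\gtrsim1/m$'' is circular---it presupposes the location of the very zeros being determined. Third, you note $\Theta_m(\alpha)\equiv0$ but miss that $\Theta_m(\beta)\equiv0$ as well ($\beta$ is the other zero of $Q$): $\Theta_m$ has an honest zero sitting \emph{on} the critical circle, so any domination argument must degenerate as $z\to\beta$, and this is exactly what has to be handled quantitatively.

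A correct completion needs a genuine two-variable estimate coupling the radial and tangential coordinates: the rate-$(m+1)$ radial decay of $|(\beta/z)^{m+1}|$ must be played against the $O(1)$ radial drift of $W$, the tangential coercivity $|W|^2-1\gtrsim\theta^2$ used away from $\theta=0$, and the tiny disk $|z-\beta|\lesssim1/m$ treated separately (there one can use that $\beta$ is a simple zero of $\Theta_m$ with $|\Theta_m'(\beta)|$ of order $m$). This two-regime bookkeeping is precisely what the paper does in Proposition~\ref{prop:RoucheineqCDneg} (for Theorem~\ref{thm:maintheorem3}), where the cases $m\epsilon>\eta_1$ versus $m\epsilon$ small, and $\epsilon>\eta\Re\delta$ versus $\epsilon/\Re\delta$ small, are distinguished. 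For Theorem~\ref{thm:maintheorem2} itself the paper sidesteps the boundary contact entirely: it passes to the reciprocal polynomials $H_m$, so the claim becomes that $H_m$ has no zeros in the \emph{open} ball $|z|<t_2$, and it applies Rouch\'e on circles $|z|=t_2-\epsilon$ against the \emph{linear} (non-exponential) part of the numerator (Proposition~\ref{prop:RoucheineqCDpos}); the hypothesis $CD\ge0$ makes the $\epsilon^0$-terms of the two sides exactly equal, because $|C+Dt_2|=|C|+|D|t_2$, and the factor $m+1$ produced by differentiating $|z|^{m+1}$ wins the $\epsilon$-linear comparison, so for each fixed large $m$ the strict inequality holds for \emph{all} sufficiently small $\epsilon$, and $\epsilon\to0$ gives the closed ball. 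If you want to keep your direct formulation, the analogous repair is to run Rouch\'e for $Q(z)P_m(z)$ on the circles $|z|=\beta(1+\epsilon)$ against the \emph{pair} of exponential terms $\tfrac{1}{\alpha-\beta}\bigl(N(\beta)(z-\alpha)(z/\beta)^{m+1}-N(\alpha)(z-\beta)(z/\alpha)^{m+1}\bigr)$, which has exactly $m+1$ zeros in that disk (all at the origin) and one zero near $\alpha$ outside, and to prove the dominance uniformly in all small $\epsilon$ at fixed large $m$ by the same expansion in which the gain $(m+1)\epsilon\,|N(\beta)|(\alpha-\beta)$ at $\theta\approx0$ beats the $O(\epsilon)$ drift and the $\theta^2$-slack covers the rest; that is the content of the paper's Proposition~\ref{prop:RoucheineqCDpos} transported to your variables, and it is the piece your proposal is missing.
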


\begin{figure}
\begin{centering}
\includegraphics[scale=0.5]{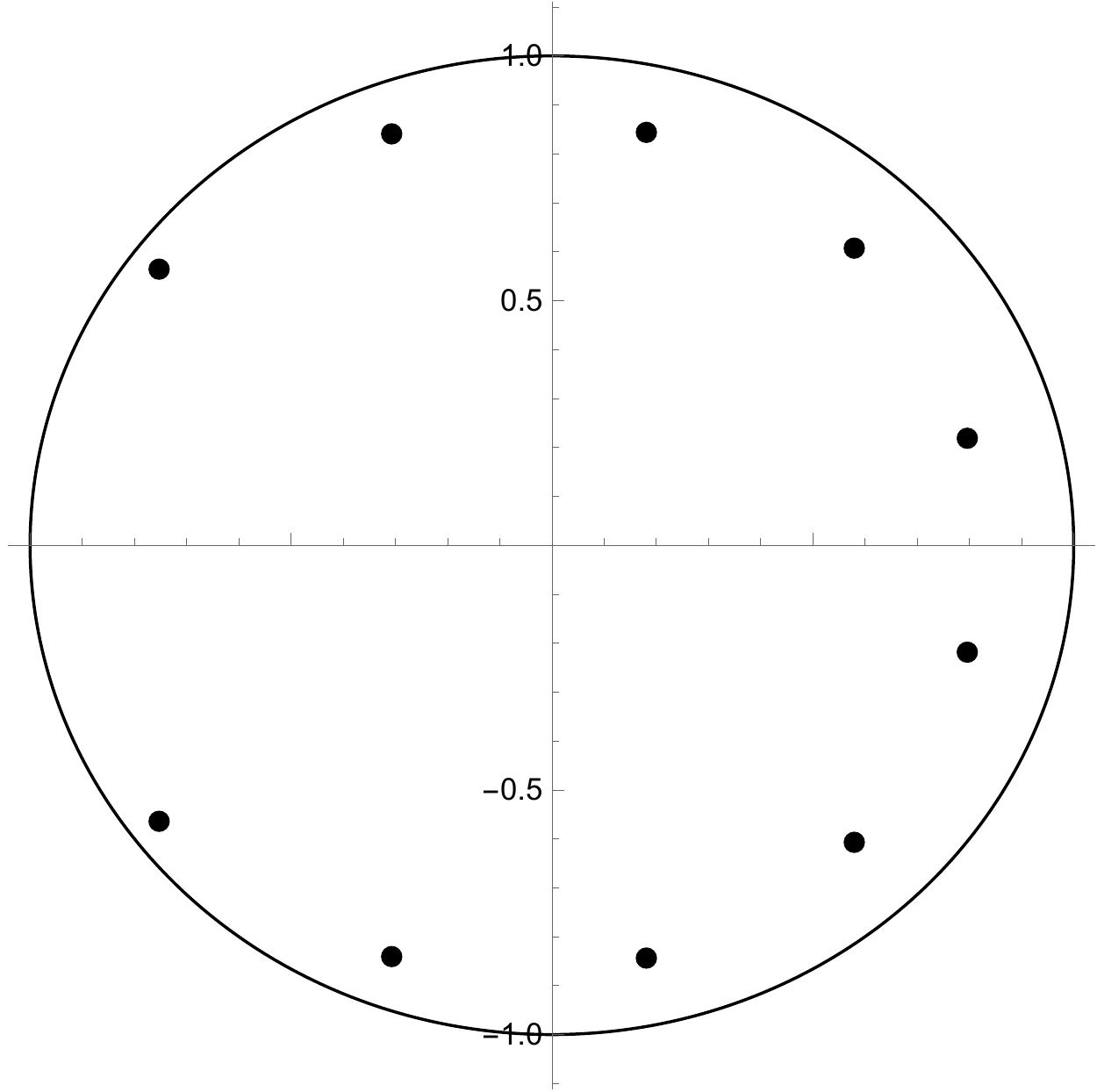}\qquad{}\includegraphics[scale=0.5]{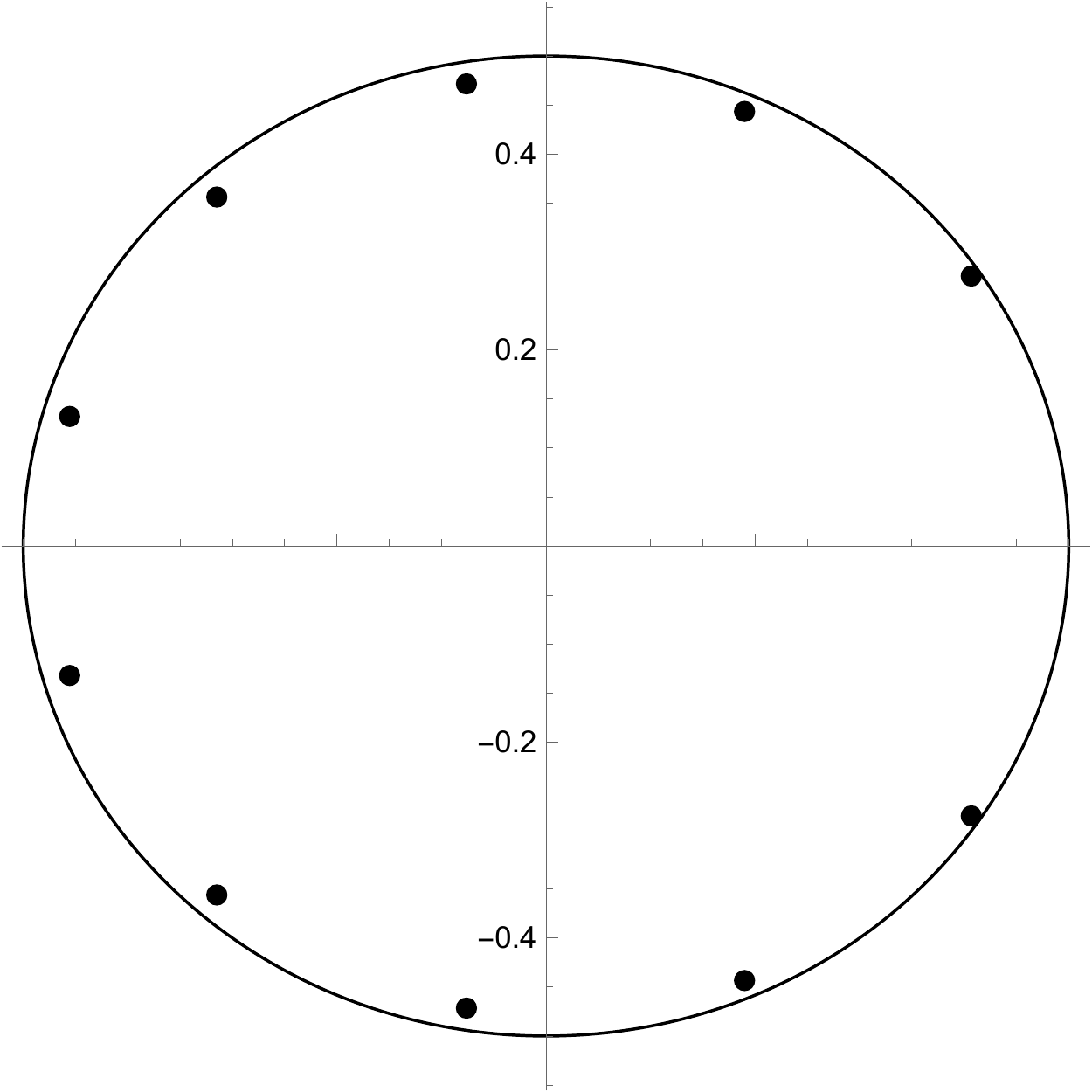} 
\par\end{centering}
\caption{\label{fig:thm23}Zeros of $H_{10}(z)$ when $(a,b,c,a_{0},a_{1})$
is $(2,5,3,1,-2)$ (left) and $(2,-3,1,2,5)$ (right)}
\end{figure}

\begin{thm}
\label{thm:maintheorem3} Let $\alpha$ be the largest (in modulus)
zero of $at^{2}+bt+c$. If $ac>0$, $b^{2}-4ac>0$, $a_{0}b(a_{1}c+ba_{0})>0$,
and $|a_{0}c|>|(a_{1}c+ba_{0})\alpha|$, then for large $m$, all
the zeros of $P_{m}(z)$ lie inside the closed ball centered at 0
with radius $|c|/|a\alpha|$. 
\end{thm}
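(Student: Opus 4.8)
The plan is to convert the zero-counting problem into one explicit equation and then to read off the side of the circle from the sign of a single quantity. First I would clear the denominator of the generating function: writing $d:=a_{1}c+ba_{0}$ and telescoping the tail $\sum_{n>m}a_{n}z^{n}$ by means of the recurrence \eqref{eq:a_nrecurrence}, one obtains
\[
(az^{2}+bz+c)P_{m}(z)=ca_{0}+dz-ca_{m+1}z^{m+1}+aa_{m}z^{m+2}=:R_{m}(z).
\]
Since $az^{2}+bz+c=a(z-\alpha)(z-\beta)$ divides $R_{m}$, the zeros of $P_{m}$ are exactly the zeros of $R_{m}$ off $\{\alpha,\beta\}$. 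Inserting the closed form $a_{n}=A\alpha^{-n}+B\beta^{-n}$ with $A=\alpha(a_{0}-\beta a_{1})/(\alpha-\beta)$ and $B=\beta(\alpha a_{1}-a_{0})/(\alpha-\beta)$, and simplifying with $\alpha\beta=c/a$, the equation $R_{m}(z)=0$ rearranges, for $z\neq\alpha,\beta$, into
\[
\Bigl(\tfrac{z}{\beta}\Bigr)^{m+1}=\frac{W(z)}{1+\varepsilon_{m}(z)},\qquad W(z):=1+\frac{A}{B}\cdot\frac{1-z/\beta}{1-z/\alpha},
\]
where $\varepsilon_{m}(z)=\tfrac{A}{B}\lambda^{m}(z-\beta)/(z-\alpha)$ and $\lambda:=\beta/\alpha$. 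Because $ac>0$ and $b^{2}-4ac>0$, the roots are real, distinct and of one sign, so $\lambda\in(0,1)$ and $\varepsilon_{m}=O(\lambda^{m})$ uniformly away from $\alpha$.

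Second, I would show $P_{m}(z)\neq0$ for $|z|>|\beta|$ once $m\gg1$. If $z$ stays a fixed distance outside the circle and away from $\alpha$, the left-hand side has modulus $(|z|/|\beta|)^{m+1}$, which grows geometrically, while the right-hand side stays bounded; this rules out such zeros. The delicate zeros are those approaching $|z|=|\beta|$, and for these the modulus of the displayed equation forces $|z/\beta|^{m+1}=|W(z)|\bigl(1+O(\lambda^{m})\bigr)$. Hence a zero can sit \emph{outside} the circle near a boundary point only if $|W|\ge1$ there. On the circle itself a direct computation yields
\[
|W(\beta e^{i\phi})|^{2}-1=\frac{2(1-\cos\phi)}{|1-\lambda e^{i\phi}|^{2}}\,\kappa(\kappa+1+\lambda),\qquad \kappa:=\frac{A}{B},
\]
so the side is governed entirely by the sign of $\kappa(\kappa+1+\lambda)$: if it is negative, then $|W|<1$ everywhere on $|z|=|\beta|$ except at $z=\beta$, so every zero approaching the circle at a point other than $\beta$ is pushed strictly inside.

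Third, I would check that the hypotheses force $\kappa(\kappa+1+\lambda)<0$. Writing $z_{0}:=-ca_{0}/d$ for the unique zero of the numerator $ca_{0}+dz$, one has $ca_{0}+d\alpha=d(\alpha-z_{0})$ and $ca_{0}+d\beta=d(\beta-z_{0})$, whence $\kappa=-\lambda(\alpha-z_{0})/(\beta-z_{0})$. A short calculation then collapses the sign to
\[
\sign\bigl(\kappa(\kappa+1+\lambda)\bigr)=-\sign(\alpha)\,\sign(\alpha-z_{0})\,\sign(\beta^{2}-\alpha z_{0}).
\]
The hypothesis $a_{0}b(a_{1}c+ba_{0})>0$ pins $\sign(z_{0})=\sign(\alpha)$, while $|a_{0}c|>|(a_{1}c+ba_{0})\alpha|$ is precisely $|z_{0}|>|\alpha|$; together they place $z_{0}$ strictly beyond $\alpha$ on the same side, making both sign factors cooperate and giving $\kappa(\kappa+1+\lambda)<0$. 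By the second step, all boundary-cluster zeros with argument bounded away from that of $\beta$ then lie in the open ball.

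The \textbf{main obstacle} is the tangency point $z=\beta$, where $W(\beta)=1$ and, just outside the circle in the radial direction, $|W|>1$, so the modulus test is inconclusive and a zero could a priori escape. I would resolve this by a local analysis on a shrinking neighborhood of $\beta$: expanding the exact equation and using that $(z/\beta)^{m+1}$ has radial derivative of size $m+1$ while $W$ has bounded derivative and $\varepsilon_{m}=O(\lambda^{m})$, one finds that the only solution there is the excluded point $z=\beta$ itself; equivalently, Rouché on a small circle about $\beta$ shows $P_{m}$ has no zero in the outside cusp. A companion estimate excludes a zero near $\alpha$, since $P_{m}(\alpha)=\sum_{n\le m}(A+B\lambda^{-n})\sim\tfrac{B}{1-\lambda}\lambda^{-m}\to\infty$, the hypothesis $|z_{0}|>|\alpha|$ guaranteeing $B\neq0$. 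As no zero of $P_{m}$ then has modulus exceeding $|\beta|$ and $P_{m}$ has exactly $m$ zeros, all of them lie in the closed ball of radius $|c|/|a\alpha|=|\beta|$ (cf. Remark \ref{rem:limitzeros}); the borderline case $\kappa(\kappa+1+\lambda)=0$, i.e. $|z_{0}|=|\alpha|$, is ruled out by the strict inequality in the hypothesis.
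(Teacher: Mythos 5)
Your algebraic setup is correct, and I verified it in detail: the identity $(az^{2}+bz+c)P_{m}(z)=ca_{0}+dz-ca_{m+1}z^{m+1}+aa_{m}z^{m+2}$, the rearrangement of $R_{m}(z)=0$ into $(z/\beta)^{m+1}=W(z)/(1+\varepsilon_{m}(z))$, the on-circle formula $|W(\beta e^{i\phi})|^{2}-1=\frac{2(1-\cos\phi)}{|1-\lambda e^{i\phi}|^{2}}\kappa(\kappa+1+\lambda)$, the sign identity for $\kappa(\kappa+1+\lambda)$, and the verification that the hypotheses force $\kappa(\kappa+1+\lambda)<0$ all check out. This is a genuinely different framework from the paper, which works with the reciprocal polynomials $H_{m}$ and a generating-function decomposition. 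But the proposal has a real gap, and it sits exactly at the point you yourself flag as the main obstacle.

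The gap is that your case analysis of potential zeros outside the circle is not exhaustive. You cover (a) zeros a fixed distance outside the circle, (b) zeros whose limit point on the circle is some $\zeta\neq\beta$, and (c) zeros inside a shrinking neighborhood of $\beta$; but a zero can approach $\beta$ at a rate slower than $1/m$, say $|z_{m}-\beta|\asymp m^{-1/2}$, and such a zero falls into none of these cases: (b) fails because its limit point \emph{is} $\beta$, and (c) fails because the heuristic ``radial derivative of size $m+1$ beats bounded derivative'' (equivalently, a Rouch\'e count giving a unique solution) is only valid on disks of radius $O(1/m)$ around $\beta$, which such a zero never enters. This intermediate regime is not vacuous: the zeros of $R_{m}$ accumulate along the circle with angular spacing roughly $2\pi/(m+1)$, so any fixed neighborhood of $\beta$ contains on the order of $m$ zeros of $P_{m}$ — which also shows that ``Rouch\'e on a small circle about $\beta$'' cannot be run on a circle of \emph{fixed} radius (the equation's solutions come from all branches $\log W-\log(1+\varepsilon_{m})+2\pi ik$, and a fixed disk contains many of them, so no comparison of functions there can conclude that $z=\beta$ is the only solution). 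Proving that all of these near-$\beta$ zeros lie weakly inside the circle is precisely the hard content of the theorem. The missing step can be supplied from your own formula: near $\phi=0$ it gives the quantitative bound $\log|W(\beta e^{i\phi})|\le-c\phi^{2}$, and combined with a radial Lipschitz bound on $\log|W|$, any zero $z=\beta e^{s+i\phi}$ with $s>0$ must satisfy $(m+1)s\le-c\phi^{2}+Cs+O(\lambda^{m})$, forcing $|\phi|$ and $s$ to be $O(\lambda^{m/2})$; only after this confinement does your $O(1/m)$-disk Rouch\'e argument apply and show the lone solution there is $z=\beta$ itself. This uniform treatment of the tangency region is exactly what the paper's Proposition \ref{prop:RoucheineqCDneg} accomplishes with its case distinctions ($m\epsilon$ large versus small, then $\epsilon/\Re\delta$ large versus small). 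A smaller, fixable issue of the same flavor: your exclusion of zeros near $\alpha$ only evaluates $P_{m}$ at the single point $\alpha$; you need a neighborhood statement, e.g. that $(\beta/z)^{m+1}P_{m}(z)$ converges uniformly near $\alpha$ to $B/(z/\beta-1)\neq0$.
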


For examples of Theorem \ref{thm:maintheorem2} and \ref{thm:maintheorem3},
see Figure \ref{fig:thm23} (left and right respectively).

\section{The generating function and its reduced form}

Since the sequence $\left\{ a_{n}\right\} _{n=0}^{\infty}$ satisfies
the recurrence \eqref{eq:a_nrecurrence}, for small $t$ we collect
the $t^{n}$- coefficents of 
\[
(c+bt+at^{2})\sum_{n=0}^{\infty}a_{n}t^{n}
\]
and conclude that this expression is 
\[
ca_{0}+(ca_{1}+ba_{0})t.
\]
Thus the generating function for the sequence $\left\{ a_{n}\right\} _{n=0}^{\infty}$
is 
\[
\sum_{n=0}^{\infty}a_{n}t^{n}=\frac{ca_{0}+(ca_{1}+ba_{0})t}{c+bt+at^{2}}.
\]
To study the zero distribution of the sequence $\left\{ P_{m}(z)\right\} _{m=0}^{\infty}$,
it suffices to study that of the sequence of reciprocal polynomials
\[
P_{m}^{*}(z):=\sum_{n=0}^{m}a_{n}z^{m-n}.
\]
The generating function for this sequence of reciprocal polynomials
is 
\begin{align*}
\sum_{m=0}^{\infty}P_{m}^{*}(z)t^{m} & =\sum_{m=0}^{\infty}\sum_{n=0}^{m}a_{n}z^{m-n}t^{m}\\
 & =\sum_{n=0}^{\infty}\sum_{m=n}^{\infty}a_{n}z^{m-n}t^{m}\\
 & =\sum_{n=0}^{\infty}a_{n}t^{n}\sum_{m=0}^{\infty}z^{m}t^{m}\\
 & =\frac{ca_{0}+(ca_{1}+ba_{0})t}{(c+bt+at^{2})(1-tz)}.
\end{align*}
We divide the numerator and denominator by $c$ and apply the substitutions
$t\rightarrow-\sign(ab)\sqrt{|c/a|}t$ and $z\rightarrow-\sign(ab)\sqrt{|a/c|}z$
to conclude that for small $t$ 
\[
\sum_{m=0}^{\infty}H_{m}(z)t^{m}=\frac{D+Ct}{(1+Bt+\sign(ac)t^{2})(1-tz)}
\]
where 
\begin{align}
H_{m}(z) & =\sign^{m}(-ab)|c/a|^{m/2}P_{m}^{*}\left(-\sign(ab)\sqrt{|a/c|}z\right),\label{eq:HmPm*}\\
B & =-\sign(ac)|b/\sqrt{ac}|,\nonumber \\
C & =-(a_{1}+ba_{0}/c)\sign(ab)\sqrt{|c/a|},\nonumber \\
D & =a_{0}.\nonumber 
\end{align}
We let $t_{1}$ and $t_{2}$ be the two zeros of $1+Bt+\sign(ac)t^{2}$
where $|t_{2}|\ge|t_{1}|$. If $\alpha$ is the largest (in modulus)
zero of $c+bt+at^{2}$, then from the substitutions above we have
\begin{equation}
\alpha=-\sign(ab)\sqrt{|c/a|}t_{2}.\label{eq:alphat2}
\end{equation}
With 
\[
\frac{D+Ct}{(1+Bt+\sign(ac)t^{2})(1-tz)}=\frac{D+Ct}{\sign(ac)(t-t_{1})(t-t_{2})(1-tz)}
\]
we apply partial fractions (if $t_{1}\ne t_{2}$) to rewrite the expression
above as $\sign(ac)$ times 
\[
\frac{D+Ct_{1}}{(t_{1}-t_{2})(1-zt_{1})(t-t_{1})}+\frac{D+Ct_{2}}{(t_{2}-t_{1})(1-zt_{2})(t-t_{2})}+\frac{z(C+Dz)}{(1-zt_{1})(1-zt_{2})(1-zt)}.
\]
From 
\begin{align*}
\frac{1}{t-t_{k}} & =-\sum_{m=0}^{\infty}\frac{t^{m}}{t_{k}^{m+1}},\qquad k=1,2\\
\frac{1}{1-zt} & =\sum_{m=0}^{\infty}z^{m}t^{m},
\end{align*}
we expand the terms above as a power series in $t$ and collect the
$t^{m}$-coefficient to conclude from $t_{1}t_{2}=\sign(ac)$ that
if $t_{1}\ne t_{2}$ , then 
\begin{align}
\sign(ac)H_{m}(z) & =-\frac{D+Ct_{1}}{(t_{1}-t_{2})(1-zt_{1})t_{1}^{m+1}}-\frac{D+Ct_{2}}{(t_{2}-t_{1})(1-zt_{2})t_{2}^{m+1}}+\frac{z^{m+1}(C+Dz)}{(1-zt_{1})(1-zt_{2})}\nonumber \\
 & =\frac{-t_{2}^{m+1}(1-zt_{2})(Ct_{1}+D)-t_{1}^{m+1}(zt_{1}-1)(Ct_{2}+D)+\sign^{m+1}(ac)z^{m+1}(t_{1}-t_{2})(C+Dz)}{\sign^{m+1}(ac)(t_{1}-t_{2})(1-zt_{1})(1-zt_{2})}.\label{eq:H_mform}
\end{align}

\begin{rem}
\label{rem:limitzeros}As a consequence of \cite[Theorem 1.5]{sokal},
if $|t_{2}|>|t_{1}|$, then the major portion of the limiting curve
(those satisfying condition (b) of this theorem), where the zeros
of the numerator of \eqref{eq:H_mform} approach as $m\rightarrow\infty$,
is the circle radius $|t_{2}|$. Thus we deduce from \eqref{eq:HmPm*}
and \eqref{eq:alphat2} that the majority of zeros of $P_{m}(z)$
approach the circle radius $|c|/|a\alpha|$. 
\end{rem}

\section{The location of zeros}

In this section, we will prove Theorems \ref{thm:maintheorem1}, \ref{thm:maintheorem2},
and \ref{thm:maintheorem3}. For this whole section, let $m\in\mathbb{N}$
be large.

\subsection{The case $ac<0$}

In the case $ac<0$, we have $t_{1}t_{2}=-1$ . The fact that $t_{1}+t_{2}=|b/\sqrt{ac}|>0$
and $|t_{2}|\ge|t_{1}|$ implies $-1<t_{1}<0$ and $t_{2}>1$. The
identity \eqref{eq:H_mform} becomes 
\begin{equation}
(-1)^{m}H_{m}(z)=\frac{-t_{2}^{m+1}(1-zt_{2})(Ct_{1}+D)-t_{1}^{m+1}(zt_{1}-1)(Ct_{2}+D)+(-1)^{m+1}z^{m+1}(t_{1}-t_{2})(C+Dz)}{(t_{1}-t_{2})(1-zt_{1})(1-zt_{2})}.\label{eq:H_mformacneg}
\end{equation}
To study the zeros of $H_{m}(z)$, we will apply Rouche's Theorem
to the numerator of the right side of \eqref{eq:H_mform}. The lemma
below plays an important role in the application of this theorem. 
\begin{prop}
\label{prop:Roucheineq}For sufficiently small $\epsilon>0$, if $|z|=t_{2}+\epsilon$
and $CD\ge0$, then 
\begin{equation}
\left|z^{m+1}(t_{1}-t_{2})(C+Dz)\right|>\left|t_{2}^{m+1}(1-zt_{2})(Ct_{1}+D)+t_{1}^{m+1}(zt_{1}-1)(Ct_{2}+D)\right|.\label{eq:Roucheineqacneg}
\end{equation}
\end{prop}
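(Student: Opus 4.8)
The plan is to verify \eqref{eq:Roucheineqacneg} as a Rouché-type domination estimate on the circle $|z| = t_2 + \epsilon$, the left-hand ``large'' term being the one that will later control the zero count of the numerator of \eqref{eq:H_mform}. The driving mechanism is purely a matter of growth rates: since $ac<0$ gives $t_1 t_2 = -1$ with $-1 < t_1 < 0 < 1 < t_2$, the circle of radius $t_2 + \epsilon$ is strictly larger than $t_2$, so the factor $|z|^{m+1} = (t_2+\epsilon)^{m+1}$ on the left outgrows both $t_2^{m+1}$ and $|t_1|^{m+1} = t_2^{-(m+1)}$ at the exponential rate $(1 + \epsilon/t_2)^{m+1}$ as $m \to \infty$. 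I will exploit this by bounding the left side below and the right side above by quantities whose ratio tends to infinity. Note first that $|t_1 - t_2| = t_2 - t_1$, so the left side equals $(t_2+\epsilon)^{m+1}(t_2 - t_1)\,|C + Dz|$.

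To bound the left side below, the only issue is that $|C + Dz|$ could be small. Since $C$ and $D$ are real, $\min_{|z| = t_2+\epsilon}|C + Dz| = \bigl|\,|C| - |D|(t_2+\epsilon)\,\bigr|$, and because $(a_0,a_1) \neq (0,0)$ the pair $(C,D)$ is nonzero, so this minimum, say $\delta$, is strictly positive for all sufficiently small $\epsilon$ (if $|C/D| > t_2$ one takes $\epsilon < |C/D| - t_2$; otherwise any small $\epsilon$ works). Thus the left side is at least $(t_2+\epsilon)^{m+1}(t_2 - t_1)\,\delta$.

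For the right side I would use the triangle inequality, bounding each linear factor by its maximal modulus on the circle, $|1 - zt_2| \le 1 + (t_2+\epsilon)t_2$ and $|zt_1 - 1| \le 1 + (t_2+\epsilon)|t_1|$, to obtain a bound of the form $t_2^{m+1}M_1 + |t_1|^{m+1}M_2$ with $M_1, M_2$ independent of $m$. Dividing the resulting lower-over-upper ratio through by $t_2^{m+1}$, the numerator carries $(1+\epsilon/t_2)^{m+1}\to\infty$ while $(|t_1|/t_2)^{m+1}\to 0$ in the denominator, so the ratio tends to $\infty$ and in particular exceeds $1$ for all large $m$, which is exactly \eqref{eq:Roucheineqacneg}.

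The step I expect to be most delicate is securing the uniform positive lower bound $\delta$ on $|C + Dz|$, since this forces a case split according to whether $|C/D|$ is smaller than, equal to, or larger than $t_2$. This is not a mere technicality: it is the very dichotomy that governs whether the extra zero $z = -C/D$ of the large term lies inside or outside the circle, and hence whether the possible exceptional zero of Theorem \ref{thm:maintheorem1} appears. The standing hypothesis $CD \ge 0$ is the translation of $a_0 b(a_1 c + ba_0) \ge 0$ under $ac<0$; it fixes the sign configuration of $C$ and $D$ (the minimum of $|C+Dz|$ is then attained on the negative real axis), which keeps this case analysis clean, even though the exponential domination itself is robust to the sign of $CD$.
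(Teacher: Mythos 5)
There is a genuine gap here, and it is one of quantifier order: your mechanism proves the inequality for each \emph{fixed} $\epsilon>0$ once $m$ is large \emph{depending on} $\epsilon$ (your engine is $(1+\epsilon/t_{2})^{m+1}\rightarrow\infty$, so your threshold on $m$ grows like $t_{2}/\epsilon$ as $\epsilon\rightarrow0$). What the paper needs, and what its proof delivers, is the reverse order: for each fixed large $m$, the inequality \eqref{eq:Roucheineqacneg} holds for \emph{all} sufficiently small $\epsilon$. The order matters because of how the proposition is used: Rouch\'e's theorem on $|z|=t_{2}+\epsilon$ counts the zeros of the numerator of \eqref{eq:H_mformacneg} in the open ball $|z|<t_{2}+\epsilon$, and one must then let $\epsilon\rightarrow0$ \emph{with $m$ fixed} to obtain the count of zeros of $H_{m}$ in the closed ball $|z|\le t_{2}$; Theorem \ref{thm:maintheorem1} requires a lower bound ($m-1$, resp.\ $m$) on that closed-ball count, equivalently that $P_{m}$ has at most one (resp.\ no) zero in the \emph{full} open ball of radius $|c|/|a\alpha|$. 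With your order of quantifiers you only control balls of radius $t_{2}+\epsilon$ with $\epsilon$ chosen before $m$, i.e.\ you only keep the zeros of $P_{m}$ out of the strictly smaller ball of radius $\frac{t_{2}}{t_{2}+\epsilon}\,|c|/|a\alpha|$; since by Remark \ref{rem:limitzeros} the zeros accumulate on the circle of radius $|c|/|a\alpha|$ itself, this weaker statement loses exactly the content of the theorem.

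Moreover, the gap cannot be closed by re-comparing your two bounds, because in the regime that matters ($\epsilon\lesssim1/m$, $m$ fixed) they collide and even cross. As $\epsilon\rightarrow0$ your lower bound for the left side tends to $t_{2}^{m+1}(t_{2}+1/t_{2})\bigl||C|-|D|t_{2}\bigr|=t_{2}^{m}(t_{2}^{2}+1)\bigl||C|-|D|t_{2}\bigr|$, while your term-by-term triangle-inequality bound for the right side tends to $t_{2}^{m}(t_{2}^{2}+1)\bigl||C|-|D|t_{2}\bigr|+2|t_{1}|^{m+1}|Ct_{2}+D|$ (here $|Ct_{1}+D|=\bigl||C|-|D|t_{2}\bigr|/t_{2}$ precisely because $CD\ge0$ and $t_{1}=-1/t_{2}$, which is also why the hypothesis $CD\ge0$ is essential and not, as you suggest, dispensable). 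So at order $\epsilon^{0}$ your upper bound actually exceeds your lower bound by an exponentially small but positive amount, and for $\epsilon$ of order $t_{2}^{-2m}$ nothing can be concluded. The paper avoids both problems with two ideas absent from your proposal: first, it bounds the right side by collecting the coefficients of $z$ and exploiting the known relative sign $\delta$ of the two terms, which makes the constant ($\epsilon^{0}$) terms of the two sides cancel \emph{exactly}; second, it expands both sides to first order in $\epsilon$ as in \eqref{eq:lefttermacneg} and wins the comparison at order $\epsilon$, where the left side's coefficient carries the factor $(m+1)$ coming from differentiating $(t_{2}+\epsilon)^{m+1}$, dominating the right side's $\epsilon$-coefficient $\approx t_{2}^{m+1}|C-Dt_{2}|$ for large $m$. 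That first-order-in-$\epsilon$ comparison, not exponential growth of $(t_{2}+\epsilon)^{m+1}$ over $t_{2}^{m+1}$, is the actual engine of the proof.
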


\begin{proof}
In the case $Ct_{1}+D=0$, the inequality for large $m$ follows immediately
from the fact that $|t_{2}|>|t_{1}|$. We consider $Ct_{1}+D\ne0$.
Since $t_{1}t_{2}=-1$, the left side of \eqref{eq:Roucheineqacneg}
is 
\begin{align*}
\left|z^{m+1}(t_{1}-t_{2})(C+Dz)\right| & =|z|^{m+1}(1/t_{2}+t_{2})|C+Dz|\\
 & \ge|z|^{m+1}(1/t_{2}+t_{2})||C|-|Dz||\\
\end{align*}
With $|z|=t_{2}+\epsilon$, we expand the last expression as a series
in $\epsilon$ and conclude it is at least 
\begin{equation}
t_{2}^{m}(t_{2}^{2}+1)||C|-|D|t_{2}|+(t_{2}^{2}+1)(||C|-|D|t_{2}|t_{2}^{m-1}(m+1)-|D|t_{2}^{m})\epsilon+\mathcal{O}(\epsilon^{2}).\label{eq:lefttermacneg}
\end{equation}
In the main term of the expression above, the fact that $CD\ge0$,
$t_{2}>0$, and $t_{1}t_{2}=-1$ imply 
\[
||C|-|D|t_{2}|=\frac{|Ct_{1}+D|}{t_{1}}\ne0.
\]

On the other hand by collecting the coefficients of $z$, the right
side of \eqref{eq:Roucheineqacneg} is 
\begin{align}
 & \left|t_{2}^{m+1}(1-zt_{2})(Ct_{1}+D)+t_{1}^{m+1}(zt_{1}-1)(Ct_{2}+D)\right|\nonumber \\
= & \left|z(-t_{2}^{m+2}(Ct_{1}+D)+t_{1}^{m+2}(Ct_{2}+D))+t_{2}^{m+1}(Ct_{1}+D)-t_{1}^{m+1}(Ct_{2}+D)\right|\nonumber \\
\le & (t_{2}+\epsilon)\left|t_{2}^{m+2}(Ct_{1}+D)-t_{1}^{m+2}(Ct_{2}+D)\right|+\left|t_{2}^{m+1}(Ct_{1}+D)-t_{1}^{m+1}(Ct_{2}+D)\right|\label{eq:sumabs}\\
\nonumber 
\end{align}
We note that for large $m$ 
\[
|t_{2}^{m+2}(Ct_{1}+D)|\ge|t_{1}^{m+2}(Ct_{2}+D)|
\]
and thus 
\[
\left|t_{2}^{m+2}(Ct_{1}+D)-t_{1}^{m+2}(Ct_{2}+D)\right|=t_{2}^{m+2}|Ct_{1}+D|-\delta|t_{1}^{m+2}||Ct_{2}+D|
\]
where $\delta=\sign((-1)^{m+2}(Ct_{1}+D)(Ct_{2}+D)$. From a similar
identity 
\[
\left|t_{2}^{m+1}(Ct_{1}+D)-t_{1}^{m+1}(Ct_{2}+D)\right|=t_{2}^{m+1}|Ct_{1}+D|+\delta|t_{1}^{m+1}||Ct_{2}+D|,
\]
\eqref{eq:sumabs} becomes 
\begin{align*}
 & (t_{2}+\epsilon)\left(t_{2}^{m+2}|Ct_{1}+D|-\delta|t_{1}^{m+2}||Ct_{2}+D|\right)+t_{2}^{m+1}|Ct_{1}+D|+\delta|t_{1}^{m+1}||Ct_{2}+D|\\
= & t_{2}^{m+3}|Ct_{1}+D|+t_{2}^{m+1}|Ct_{1}+D|+\left(t_{2}^{m+2}|Ct_{1}+D|-\delta|t_{1}^{m+2}||Ct_{2}+D|\right)\epsilon\\
= & t_{2}^{m+2}|C-Dt_{2}|+t_{2}^{m}|C-Dt_{2}|+\left(t_{2}^{m+1}|C-Dt_{2}|-\delta|t_{1}^{m+1}||C-Dt_{2}|\right)\epsilon.
\end{align*}
The inequality $CDt_{2}\ge0$ implies $|C-Dt_{2}|=||C|-|D|t_{2}|$.
We compare the last expression above with \eqref{eq:lefttermacneg}
and complete the lemma using the fact that 
\[
(t_{2}^{2}+1)(||C|-|D|t_{2}|t_{2}^{m-1}(m+1)-|D|t_{2}^{m})>t_{2}^{m+1}|C-Dt_{2}|-\delta|t_{1}^{m+1}||C-Dt_{2}|
\]
for large $m$. 
\end{proof}
From Proposition \ref{prop:Roucheineq} and the Rouche's Theorem,
we conclude that for small $\epsilon$, the number of zeros of the
polynomial 
\begin{equation}
-t_{2}^{m+1}(1-zt_{2})(Ct_{1}+D)-t_{1}^{m+1}(zt_{1}-1)(Ct_{2}+D)+(-1)^{m+1}z^{m+1}(t_{1}-t_{2})(C+Dz)\label{eq:rouchepolyacneg}
\end{equation}
inside the ball $|z|<t_{2}+\epsilon$ is 
\[
\begin{cases}
m+1 & \text{ if }|C|>|D|t_{2}\\
m+2 & \text{ if }|C|\le|D|t_{2}.
\end{cases}
\]
We can check that $-t_{2}$ and $-t_{1}$ are two zeros of \eqref{eq:rouchepolyacneg}.
We deduce from \eqref{eq:H_mformacneg} that $H_{m}(z)$ has 
\[
\begin{cases}
m-1 & \text{ if }|C|>|D|t_{2}\\
m & \text{ if }|C|\le|D|t_{2}
\end{cases}
\]
zeros inside the closed ball $|z|\le t_{2}$. From 
\begin{align*}
C & =-(a_{1}+ba_{0}/c)\sign(ab)\sqrt{|c/a|},\\
D & =a_{0},
\end{align*}
and \eqref{eq:alphat2}, the inequalities $|C|\le|D|t_{2}$ and $CD\ge0$
are equivalent to 
\[
|a_{1}c+ba_{0}|\le|a\alpha a_{0}|
\]
and 
\[
a_{0}b(a_{1}c+ba_{0})\ge0
\]
respectively. From \eqref{eq:HmPm*} and \eqref{eq:alphat2}, we conclude
that $z$ is a zero of $H_{m}(z)$ inside the closed ball $|z|\le t_{2}$
if and only if $-\sign(ab)\sqrt{|c/a|}z$ is a zero of $P_{m}^{*}(z)$
inside the closed ball radius $\alpha\sqrt{|a/c|}$. Theorem \pageref{thm:maintheorem1}
follows from the fact that $z\ne0$ is a zero of $P_{m}^{*}(z)$ if
and only if $1/z$ is a zero of $P_{m}(z)$.

\subsection{The case $ac>0$ and $b^{2}-4ac>0$}

In the case $ac>0$, we have $t_{1}t_{2}=1$ and $0<t_{1}<1<t_{2}$.
The identity \eqref{eq:H_mform} becomes 
\begin{equation}
H_{m}(z)=\frac{-t_{2}^{m+1}(1-zt_{2})(Ct_{1}+D)-t_{1}^{m+1}(zt_{1}-1)(Ct_{2}+D)+z^{m+1}(t_{1}-t_{2})(C+Dz)}{(t_{1}-t_{2})(1-zt_{1})(1-zt_{2})}.\label{eq:H_mformacpos}
\end{equation}

\begin{prop}
\label{prop:RoucheineqCDpos}For sufficiently small $\epsilon>0$,
if $|z|=t_{2}-\epsilon$ and $CD\ge0$, then 
\[
\left|z^{m+1}(t_{1}-t_{2})(C+Dz)\right|<\left|t_{2}^{m+1}(1-zt_{2})(Ct_{1}+D)+t_{1}^{m+1}(zt_{1}-1)(Ct_{2}+D)\right|.
\]
\end{prop}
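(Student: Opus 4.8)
The plan is to run the same Rouch\'e-type comparison as in Proposition \ref{prop:Roucheineq}, but now exploit that the circle $|z|=t_{2}-\epsilon$ lies \emph{inside} the critical radius $t_{2}$, so the two sides swap roles: the terms carrying $t_{2}^{m+1}$ should dominate the term carrying $z^{m+1}$. The driving mechanism is an exponential gap. Since $0<t_{2}-\epsilon<t_{2}$, the ratio $((t_{2}-\epsilon)/t_{2})^{m+1}\to 0$ as $m\to\infty$, and this beats any fixed constant. Consequently, in contrast to Proposition \ref{prop:Roucheineq}, I expect that no delicate expansion in powers of $\epsilon$ is needed here: crude bounds will suffice once $\epsilon$ is fixed small and $m$ is taken large.

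The first step is to record that the leading coefficient on the right does not vanish. Using $t_{1}t_{2}=1$ I would write $Ct_{1}+D=(C+Dt_{2})/t_{2}$; if this were $0$ then $C=-Dt_{2}$, whence $CD=-D^{2}t_{2}\le 0$, and combined with the hypothesis $CD\ge 0$ this forces $C=D=0$, contradicting $(a_{0},a_{1})\neq(0,0)$. Hence $Ct_{1}+D\neq 0$. With $R$ denoting the right side, the reverse triangle inequality gives
\[
R\ge t_{2}^{m+1}|1-zt_{2}||Ct_{1}+D|-t_{1}^{m+1}|zt_{1}-1||Ct_{2}+D|.
\]
On $|z|=t_{2}-\epsilon$ one has the \emph{uniform} lower bound $|1-zt_{2}|\ge (t_{2}-\epsilon)t_{2}-1=t_{2}^{2}-1-\epsilon t_{2}$, which is positive once $\epsilon<t_{2}-t_{1}$, together with the uniform upper bound $|zt_{1}-1|\le 1+(t_{2}-\epsilon)t_{1}<2$ (again via $t_{1}t_{2}=1$). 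These reduce the estimate to $R\ge t_{2}^{m+1}(t_{2}^{2}-1-\epsilon t_{2})|Ct_{1}+D|-2t_{1}^{m+1}|Ct_{2}+D|$.

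For the left side $L$, the crude bound $|C+Dz|\le |C|+|D|(t_{2}-\epsilon)$ yields
\[
L\le (t_{2}-\epsilon)^{m+1}(t_{2}-t_{1})(|C|+|D|t_{2}).
\]
Dividing both estimates by $t_{2}^{m+1}$ and letting $m\to\infty$ with $\epsilon$ fixed, the bound for $L/t_{2}^{m+1}$ tends to $0$ (its coefficient is constant while $((t_{2}-\epsilon)/t_{2})^{m+1}\to 0$), whereas the bound for $R/t_{2}^{m+1}$ tends to $(t_{2}^{2}-1-\epsilon t_{2})|Ct_{1}+D|>0$, since $t_{1}/t_{2}<1$ kills the subtracted $t_{1}^{m+1}$ term. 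Therefore $L<R$ for all large $m$, which is the claimed inequality.

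The only genuine subtlety — the analogue of the separate case $Ct_{1}+D=0$ handled at the start of Proposition \ref{prop:Roucheineq} — is confirming that the dominant coefficient $Ct_{1}+D$ is nonzero; here the hypothesis $CD\ge 0$ makes this automatic rather than an exceptional case, which is precisely why the $ac>0$ argument is cleaner than its $ac<0$ counterpart. The secondary point I would be careful about is that the lower bound on $|1-zt_{2}|$ must hold uniformly over the entire circle $|z|=t_{2}-\epsilon$, not merely pointwise, so that the comparison is valid for every such $z$; fixing $\epsilon<t_{2}-t_{1}$ is exactly what secures this. Note in particular that the extra hypothesis $|a_{0}c|>|(a_{1}c+ba_{0})\alpha|$ of Theorem \ref{thm:maintheorem3} plays no role in this inequality and will instead be needed only in the subsequent zero-counting via Rouch\'e's theorem.
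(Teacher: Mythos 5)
Your individual estimates are all correct (including the nice observation that $CD\ge 0$ forces $Ct_{1}+D\neq0$), but your proof establishes the inequality with the quantifiers in the wrong order, and here that is a genuine gap rather than a cosmetic one. You fix $\epsilon\in(0,t_{2}-t_{1})$ first and then take $m$ large depending on $\epsilon$. The proposition, as the paper states and uses it, fixes a large $m$ and must hold for \emph{all} sufficiently small $\epsilon$: in the proof of Theorem \ref{thm:maintheorem2} one applies Rouch\'e's theorem on $|z|=t_{2}-\epsilon$ and then lets $\epsilon\rightarrow0$ \emph{with $m$ fixed} to conclude that $H_{m}$ has no zeros in the open ball $|z|<t_{2}$; this is exactly what produces the sharp radius $|c|/|a\alpha|$. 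With your version, for a fixed $m$ the inequality is only guaranteed when $\epsilon$ exceeds a positive threshold $\epsilon_{0}(m)$, so you may not let $\epsilon\rightarrow0$, and you would only conclude that the zeros of $P_{m}$ lie in a ball whose radius exceeds $|c|/|a\alpha|$ by a quantity tending to $0$ as $m\rightarrow\infty$ --- strictly weaker than the theorem.

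Moreover, this cannot be repaired within your ``crude bounds plus exponential gap'' scheme, because the inequality degenerates as $\epsilon\rightarrow0$: at $\epsilon=0$ and $z=t_{2}$ the two sides are \emph{exactly equal}. Indeed, $t_{1}t_{2}=1$ kills the $t_{1}^{m+1}$ term at $z=t_{2}$, and $Ct_{1}+D=(C+Dt_{2})/t_{2}$ gives
\[
t_{2}^{m+1}(t_{2}-t_{1})\left|C+Dt_{2}\right|=t_{2}^{m+1}(t_{2}^{2}-1)\left|Ct_{1}+D\right|,
\]
both sides equaling $t_{2}^{m}(t_{2}^{2}-1)(|C|+|D|t_{2})$ when $CD\ge0$. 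Consequently there is no fixed-$m$ exponential gap to exploit near $\epsilon=0$; concretely, your lower bound for the right side gives away the additive term $2t_{1}^{m+1}|Ct_{2}+D|$ (the true factor $|zt_{1}-1|$ is $\mathcal{O}(\epsilon+|z-t_{2}|)$ near $z=t_{2}$, not $\mathcal{O}(1)$), and your final comparison becomes false once $\epsilon\lesssim t_{1}^{m+1}/(mt_{2}^{m})$. This is precisely why the paper's proof (mirroring Proposition \ref{prop:Roucheineq}) expands both sides to first order in $\epsilon$: the hypothesis $CD\ge0$ makes the $\epsilon^{0}$-terms cancel identically, and the strict inequality is then decided by comparing $\epsilon$-coefficients, where the left side carries $-(t_{2}^{2}-1)(|C|+|D|t_{2})t_{2}^{m-1}(m+1)$, which for large $m$ is far more negative than the right side's coefficient $\approx-t_{2}^{m+1}|C+Dt_{2}|$. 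That first-order mechanism in $\epsilon$, valid for every small $\epsilon$ once $m$ is large, is the actual content of the proposition and is the piece your argument is missing.
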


\begin{proof}
The proof is very similar to that of Proposition \ref{prop:Roucheineq}.
To avoid repetition, we only provide key steps. The left side of the
inequality in consideration is at most 
\[
t_{2}^{m}(t_{2}^{2}-1)(|C|+|D|t_{2})+(t_{2}^{2}-1)(-(|C|+|D|t_{2})t_{2}^{m-1}(m+1)-|D|t_{2}^{m})\epsilon+\mathcal{O}(\epsilon^{2})
\]
while, after collecting the coefficients of $z$, the right side is
at least 
\[
(t_{2}-\epsilon)\left|t_{2}^{m+2}(Ct_{1}+D)-t_{1}^{m+2}(Ct_{2}+D)\right|-\left|t_{2}^{m+1}(Ct_{1}+D)-t_{1}^{m+1}(Ct_{2}+D)\right|.
\]
With $\delta=\sign((Ct_{1}+D)(Ct_{2}+D)$, the last expression becomes
\[
(t_{2}-\epsilon)\left(t_{2}^{m+2}|Ct_{1}+D|-\delta t_{1}^{m+2}|Ct_{2}+D|\right)-t_{2}^{m+1}|Ct_{1}+D|+\delta t_{1}^{m+1}|Ct_{2}+D|
\]
which is 
\[
t_{2}^{m+2}|C+Dt_{2}|-t_{2}^{m}|C+Dt_{2}|+\left(-t_{2}^{m+1}|C+Dt_{2}|+\delta t_{1}^{m+1}|C+Dt_{2}|\right)\epsilon.
\]
The lemma follows from the fact that $CD\ge0$. 
\end{proof}

We now prove Theorem \ref{thm:maintheorem2}. From the fact that $a_{0}b(a_{1}c+ba_{0})<0$
is equivalent to $CD>0$ and Proposition \ref{prop:RoucheineqCDpos},
Rouche's Theorem implies that the number of zeros of the numerator
of \eqref{eq:H_mformacpos} inside $|z|<t_{2}-\epsilon$ is the same
as that of 
\[
t_{2}^{m+1}(1-zt_{2})(Ct_{1}+D)+t_{1}^{m+1}(zt_{1}-1)(Ct_{2}+D).
\]
This linear function has a zero 
\[
z=\frac{t_{2}^{m+1}(Ct_{1}+D)-t_{1}^{m+1}(Ct_{2}+D)}{t_{2}^{m+2}(Ct_{1}+D)-t_{1}^{m+1}(Ct_{2}+D)}
\]
which approaches $1/t_{2}$ as $m\rightarrow\infty$. Thus this zero
lies inside the circle $|z|<t_{2}-\epsilon$ for small $\epsilon$
and large $m$. Since $t_{1}$ is a zero of the numerator of \eqref{eq:H_mformacpos},
we let $\epsilon\rightarrow0$ and conclude that all zeros of $H_{m}(z)$
lie outside the circle $|z|\le t_{2}$. Theorem \ref{thm:maintheorem2}
follows from \eqref{eq:HmPm*}.

We now turn our attention to the proof of Theorem \ref{thm:maintheorem3}.
The condition $a_{0}b(a_{1}c+ba_{0})>0$ is equivalent to $CD<0$.
We note from \eqref{eq:H_mformacpos} that it suffices to consider
$D>0$ and $C<0$. The condition $|a_{0}c|>|(a_{1}c+ba_{0})\alpha|$
in Theorem \ref{thm:maintheorem3} is equivalent to $|D|>|Ct_{2}|$.
Together with the fact that $D>0$, $C<0$, and $t_{2}>0$, we conclude
$D+Ct_{2}>0$. With the same application of Rouche's Theorem, Theorem
\ref{thm:maintheorem3} follows directly from the proposition below. 
\begin{prop}
\label{prop:RoucheineqCDneg}For sufficiently small $\epsilon>0$,
if $D>0$, $C<0$, $D+Ct_{2}>0$, and $|z|=t_{2}-\epsilon$ , then
\begin{equation}
\left|z^{m+1}(t_{1}-t_{2})(C+Dz)\right|<\left|t_{2}^{m+1}(1-zt_{2})(Ct_{1}+D)+t_{1}^{m+1}(zt_{1}-1)(Ct_{2}+D)\right|.\label{eq:CDnegineq}
\end{equation}
\end{prop}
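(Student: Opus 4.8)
The plan is to prove the inequality \eqref{eq:CDnegineq} for every $z$ on the circle $|z|=t_2-\epsilon$; Theorem \ref{thm:maintheorem3} then follows from the application of Rouché's Theorem already described. Write $L(z)$ and $R(z)$ for the two sides of \eqref{eq:CDnegineq}. One cannot simply mimic Proposition \ref{prop:RoucheineqCDpos} by bounding $L$ above through $|C+Dz|\le|C|+|D||z|$ and $R$ below through the reverse triangle inequality: because $CD<0$, the factor $|C+Dz|$ is now largest at $z=-(t_2-\epsilon)$ while $R$ is smallest at $z=+(t_2-\epsilon)$, and the two crude bounds compare in the wrong direction (their leading terms are $t_2^m(t_2^2-1)(|C|+|D|t_2)$ versus $t_2^m(t_2^2-1)|C+Dt_2|$, and $|C|+|D|t_2>|C+Dt_2|$ precisely when $C,D$ have opposite signs). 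Resolving this mismatch is the main obstacle.

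The key idea I would use is that it suffices to check \eqref{eq:CDnegineq} at the two real points $z=\pm(t_2-\epsilon)$. Parametrizing $z=(t_2-\epsilon)e^{i\theta}$ and using that $C,D,t_1,t_2$ are real,
\[
L(z)^2=(t_2-\epsilon)^{2(m+1)}(t_1-t_2)^2\bigl(C^2+2CD(t_2-\epsilon)\cos\theta+D^2(t_2-\epsilon)^2\bigr).
\]
Collecting the coefficient of $z$ on the right, as in the previous proofs, writes $R(z)=|Az+B|$ with the real constants
\[
A=-t_2^{m+2}(Ct_1+D)+t_1^{m+2}(Ct_2+D),\qquad B=t_2^{m+1}(Ct_1+D)-t_1^{m+1}(Ct_2+D),
\]
so that $R(z)^2=A^2(t_2-\epsilon)^2+2AB(t_2-\epsilon)\cos\theta+B^2$. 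Thus $R(z)^2-L(z)^2$ is affine in $\cos\theta\in[-1,1]$, hence positive on the whole circle as soon as it is positive at the endpoints $\cos\theta=\pm1$, i.e. at $z=\pm(t_2-\epsilon)$.

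At $z=t_2-\epsilon$ I would argue as in Proposition \ref{prop:RoucheineqCDpos}. The hypotheses $D>0$, $C<0$, $D+Ct_2>0$ give $Ct_1+D>Ct_2+D>0$ and hence $C+Dt_2=t_2(Ct_1+D)>0$, so at $\epsilon=0$ both sides equal $t_2^m(t_2^2-1)(C+Dt_2)$. Expanding in $\epsilon$, the coefficient of $\epsilon$ in $L$ inherits the factor $(m+1)$ from $(t_2-\epsilon)^{m+1}$ and is therefore strictly more negative than that of $R$ for large $m$; since the constant terms coincide, $L(z)<R(z)$ at this endpoint.

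The decisive endpoint is $z=-(t_2-\epsilon)$, and here the hypothesis $D+Ct_2>0$ is exactly what is needed. Setting $\epsilon=0$, a direct computation gives $L(-t_2)=t_2^m(t_2^2-1)(-C+Dt_2)$, while $At_2-B=-(Ct_1+D)t_2^{m+1}(t_2^2+1)+2t_1^{m+1}(Ct_2+D)$ is negative for large $m$, so $R(-t_2)=(C+Dt_2)t_2^m(t_2^2+1)-2t_1^{m+1}(Ct_2+D)$. Subtracting, the bracketed terms collapse to $2t_2(Ct_2+D)$ and one finds
\[
R(-t_2)-L(-t_2)=2(Ct_2+D)\bigl(t_2^{m+1}-t_1^{m+1}\bigr),
\]
which is strictly positive since $Ct_2+D>0$ and $0<t_1<1<t_2$. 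This strict gap persists for $\epsilon$ small, so $L(z)<R(z)$ at $z=-(t_2-\epsilon)$ as well. By the affine reduction, \eqref{eq:CDnegineq} holds throughout the circle. The hard part, as anticipated, is that the naive triangle-inequality bounds point the wrong way; the affine-in-$\cos\theta$ observation localizes the problem to two real points, and $D+Ct_2>0$ delivers the required margin at the antipodal point $z=-(t_2-\epsilon)$.
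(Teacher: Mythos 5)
Your proof is correct, and it takes a genuinely different---and substantially shorter---route than the paper's. The paper splits the circle into two regions: on the arc where $t_{2}-\Re z$ is bounded below it compares $|(t_{1}-z)(C+Dt_{2})|^{2}$ with $|(t_{1}-t_{2})(C+Dz)|^{2}$ directly, and near $z=t_{2}$ it writes $z=t_{2}+\delta$ and carries out a delicate expansion in $\delta$ and $\epsilon$, using the bound $(t_{2}-\epsilon)^{m+1}\le t_{2}^{m+1}\exp\left(-(m+1)\epsilon/t_{2}\right)$ together with a case analysis according to whether $m\epsilon$ and $\epsilon/|\Re\delta|$ are large or small. Your observation that both $R(z)^{2}=|Az+B|^{2}$ and $L(z)^{2}=|z|^{2m+2}(t_{1}-t_{2})^{2}|C+Dz|^{2}$ are affine in $\cos\theta$ (because $A,B,C,D,t_{1},t_{2}$ are all real) collapses that geometry entirely: the minimum of $R^{2}-L^{2}$ over the circle is attained at $z=\pm(t_{2}-\epsilon)$, and your two endpoint computations check out---in particular the identity $R(-t_{2})-L(-t_{2})=2(Ct_{2}+D)\bigl(t_{2}^{m+1}-t_{1}^{m+1}\bigr)$, which is exactly where the hypothesis $D+Ct_{2}>0$ enters, and the degenerate comparison at $z=t_{2}-\epsilon$, where both sides agree at $\epsilon=0$ and the $\epsilon$-coefficient of $L$ beats that of $R$ by a factor of order $m$. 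One caveat is worth flagging: your argument at $z=t_{2}-\epsilon$ is a first-order Taylor comparison, so the threshold $\epsilon_{0}$ it produces depends on $m$. That is all the application needs (Rouch\'e's theorem is applied for fixed large $m$ and then $\epsilon\rightarrow0$), but it is weaker than what the paper's regime analysis aims at, namely thresholds for $m$ and $\epsilon$ independent of each other; if you want that uniform version, you must also handle the regime where $\epsilon$ is small but $m\epsilon$ is not, which your linearization misses---there the bound $(1-\epsilon/t_{2})^{m+1}\le\exp\left(-(m+1)\epsilon/t_{2}\right)$ makes $L$ exponentially smaller than $R$, so the gap only widens. As a final remark, your affine-in-$\cos\theta$ reduction applies verbatim to Propositions \ref{prop:Roucheineq} and \ref{prop:RoucheineqCDpos} as well, and would simplify those proofs too.
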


\begin{proof}
From $t_{1}t_{2}=1$, the right side of \eqref{eq:CDnegineq} is the
modulus of 
\[
t_{2}^{m+1}(t_{1}-z)(C+Dt_{2})+t_{1}^{m+1}(zt_{1}-1)(Ct_{2}+D).
\]
We next compare the modulus of the first term in the expression above
with the left side of \eqref{eq:CDnegineq}. We note that if $z=x+iy$,
then from $x^{2}+y^{2}=(t_{2}-\epsilon)^{2}$, we conclude that 
\begin{align*}
 & |(t_{1}-z)(C+Dt_{2})|^{2}-|(t_{1}-t_{2})(C+Dz)|^{2}\\
= & (C+Dt_{2})^{2}(t_{1}^{2}-2t_{1}x+(t_{2}-\epsilon)^{2})-(t_{1}-t_{2})^{2}(C^{2}+2CDx+D^{2}(t_{2}-\epsilon)^{2})\\
= & 2(C+Dt_{1})(Ct_{1}+Dt_{2}^{2})(t_{2}-x)-2t_{2}(C+Dt_{1})(C-Dt_{1}+2Dt_{2})\epsilon+\mathcal{O}(\epsilon^{2}).
\end{align*}
Since $D+Ct_{2}>0$ and $t_{2}>1>t_{1}$ the inequality \eqref{eq:CDnegineq}
holds for $\epsilon$ is small and $m$ is large if $t_{2}-x>\eta$
for some small $\eta>0$ independent of $m,\epsilon$. We now consider
$z=t_{2}+\delta$ for small $\delta=\delta(m,\epsilon)\in\mathbb{C}$.
We have 
\[
(t_{2}-\epsilon)^{2}=|z|^{2}=t_{2}^{2}+2t_{2}\Re\delta+|\delta|^{2}
\]
from which we deduce 
\begin{equation}
|\delta|^{2}=-2t_{2}\Re\delta-2t_{2}\epsilon+\mathcal{O}(\epsilon^{2}).\label{eq:deltaepsilon}
\end{equation}
We note that with $t_{1}t_{2}=1$ and $z=t_{2}+\delta$, the expression
inside the right side of \eqref{eq:CDnegineq} is 
\begin{align*}
 & t_{2}^{m+1}(t_{1}-z)(C+Dt_{2})+t_{1}^{m+1}(zt_{1}-1)(Ct_{2}+D)\\
= & t_{2}^{m+1}(t_{1}-t_{2})(C+Dt_{2})+\left(-t_{2}^{m+1}(C+Dt_{2})+t_{1}^{m+2}(Ct_{2}+D)\right)\delta.
\end{align*}
From the fact that $t_{1}t_{2}=1$, the square of modulus of the last
expression is 
\[
t_{2}^{2m+2}(t_{2}-t_{1})^{2}(C+Dt_{2})^{2}+(C+Dt_{2})^{2}t_{2}^{2m+2}(-2(t_{1}-t_{2})\Re\delta+|\delta|^{2})+\mathcal{O}\left(|\Re\delta|+|\delta|^{2}\right)
\]
which (from \eqref{eq:deltaepsilon}) is the same as 
\begin{equation}
t_{2}^{2m+2}(t_{2}-t_{1})^{2}(C+Dt_{2})^{2}+(C+Dt_{2})^{2}t_{2}^{2m+2}(-2t_{2}\epsilon-2t_{1}\Re\delta)+\mathcal{O}\left(|\Re\delta|+|\delta|^{2}+t_{2}^{2m}\epsilon^{2}\right).\label{eq:rightCDneg}
\end{equation}

We now consider the left side of \eqref{eq:CDnegineq}. We note that
\begin{align*}
|z|^{m+1} & =(t_{2}-\epsilon)^{m+1}\\
 & \le t_{2}^{m+1}\exp\left(-\frac{m+1}{t_{2}}\epsilon\right).
\end{align*}
Together with 
\begin{align*}
|C+Dz|^{2} & =(C+Dt_{2}+D\Re\delta)^{2}+D^{2}(\Im\delta)^{2}\\
 & =(C+Dt_{2})^{2}+2(C+Dt_{2})D\Re\delta+D^{2}|\delta|^{2},
\end{align*}
we conclude that the square of the left side of \eqref{eq:CDnegineq}
is at most 
\[
\exp\left(-2\frac{m+1}{t_{2}}\epsilon\right)t_{2}^{2m+2}(t_{2}-t_{1})^{2}\left((C+Dt_{2})^{2}+2(C+Dt_{2})D\Re\delta+D^{2}|\delta|^{2}\right).
\]
Using \eqref{eq:deltaepsilon}, we rewrite the express above as 
\begin{equation}
\exp\left(-2\frac{m+1}{t_{2}}\epsilon\right)t_{2}^{2m+2}(t_{2}-t_{1})^{2}\left((C+Dt_{2})^{2}+2CD\Re\delta-2D^{2}t_{2}\epsilon+\mathcal{O}(\epsilon^{2})\right).\label{eq:leftCDneg}
\end{equation}

If $m\epsilon>\eta_{1}$ for some small $\eta_{1}$, independent of
$m$ and $\epsilon$, then 
\[
\exp\left(-2\frac{m+1}{t_{2}}\epsilon\right)<1-\eta
\]
for some fixed (independent of $m$ and $\epsilon$) $\eta>0$. We
compare the main terms of \eqref{eq:rightCDneg} and \eqref{eq:leftCDneg}
and conclude the lemma. On the other hand, for sufficiently small
$m\epsilon$ \footnote{$m\epsilon$ is small enough so that $\mathcal{O}\left(m^{2}\epsilon^{2}\right)$
is dominated by the two main terms in \eqref{eq:expexpansion}. The
big-Oh constant is independent of $m,\epsilon$.}, we have 
\begin{equation}
\exp\left(-2\frac{m+1}{t_{2}}\epsilon\right)=1-2\frac{m+1}{t_{2}}\epsilon+\mathcal{O}\left(m^{2}\epsilon^{2}\right)\label{eq:expexpansion}
\end{equation}
and thus \eqref{eq:leftCDneg} is 
\begin{align*}
 & t_{2}^{2m+2}(t_{2}-t_{1})^{2}(C+Dt_{2})^{2}+t_{2}^{2m+2}(t_{2}-t_{1})^{2}\left(2CD\Re\delta-2D^{2}t_{2}\epsilon\right)\\
 & -2\frac{m+1}{t_{2}}t_{2}^{2m+2}(t_{2}-t_{1})^{2}(C+Dt_{2})^{2}\epsilon+\mathcal{O}\left(t_{2}^{2m}(m|\Re\delta|\epsilon+m^{2}\epsilon^{2})\right).
\end{align*}
If we subtract the expression above from \eqref{eq:rightCDneg}, we
get 
\begin{align}
 & (C+Dt_{2})^{2}t_{2}^{2m+2}(-2t_{2}\epsilon-2t_{1}\Re\delta)-t_{2}^{2m+2}(t_{2}-t_{1})^{2}\left(2CD\Re\delta-2D^{2}t_{2}\epsilon\right)\nonumber \\
 & +2\frac{m+1}{t_{2}}t_{2}^{2m+2}(t_{2}-t_{1})^{2}(C+Dt_{2})^{2}\epsilon+\mathcal{O}\left(t_{2}^{2m}(m|\Re\delta|\epsilon+m^{2}\epsilon^{2})+|\Re\delta|+|\delta|^{2}\right).\label{eq:CDnegdif}
\end{align}
If $\epsilon>\eta\Re\delta$ for some small $\eta$ independent of
$m,\epsilon,\delta$, then the third term of \eqref{eq:CDnegdif}
dominates all other terms when $m$ is large and thus \eqref{eq:CDnegdif}
is positive for large $m$. On the other hand when $\epsilon/\Re\delta$
is sufficiently small, the main term of \eqref{eq:CDnegdif} is 
\begin{align*}
 & -2(C+Dt_{2})^{2}t_{2}^{2m+2}t_{1}\Re\delta-2CDt_{2}^{2m+2}(t_{2}-t_{1})^{2}\Re\delta+2\frac{m+1}{t_{2}}t_{2}^{2m+2}(t_{2}-t_{1})^{2}(C+Dt_{2})^{2}\epsilon\\
= & -2t_{2}^{2m+2}(C+Dt_{1})(Ct_{1}+Dt_{2}^{2})\Re\delta+2\frac{m+1}{t_{2}}t_{2}^{2m+2}(t_{2}-t_{1})^{2}(C+Dt_{2})^{2}\epsilon>0.
\end{align*}
We conclude the proof of the lemma. 
\end{proof}

\end{document}